\theoremstyle{plain}
\newtheorem{thm}{Theorem}
\newtheorem{prop}[thm]{Proposition}
\theoremstyle{definition}
\newtheorem{example}[thm]{Example}
\def\Con{\mathcal{C}\mathit{on}}
\def\Fuchs{\mathcal{F}\mathit{uchs}}
\begin{document}

\author[F. Loray]{Frank LORAY}
\address{ IRMAR - UMR 6625 du CNRS, Campus de Beaulieu, Universit\'e de Rennes 1, 35042 Rennes Cedex, France.}
\email{frank.loray@univ-rennes1.fr}

\title{Introduction to moduli spaces of connections: some explicit constructions}

\begin{abstract}We give some concrete examples of moduli spaces of connections.
Precisely, we explain how to explicitely construct the moduli spaces 
of rank $2$ fuchsian systems and logarithmic connections on the Riemann sphere 
with $4$ poles. The former ones are affine cubic surfaces; the latter ones are analytically
isomorphic to affine surfaces but not algebraically: they do not carry non constant 
regular functions. We end with some remark on arbitrary number of poles.
\end{abstract}

\subjclass{34M55, 34M56, 34M03}
\keywords{Ordinary differential equations, Moduli spaces, Connections}

\maketitle

\section{Introduction: variation on an example of Serre}

Given a complex smooth projective curve $X$, a rank $1$ (flat) connection,
or equivalently a rank $1$ local system on $X$, is an element of the cohomology 
group $H^1(X,\mathbb C^*)$. This is the easiest example of a moduli space of connections. 
From the exact sequence of sheaves
$$0\to\mathbb  C^*\to \mathcal O^*\to \Omega^1\to 0$$
(where $\mathcal O^*\to \Omega^1$ is given by $f\mapsto\frac{df}{f}$),
we can derive the associate long exact sequence of cohomology groups and extract the following
\begin{equation}\label{Eq:ExactSeqRk1}
0\to H^0(X,\Omega^1) \to H^1(X,\mathbb C^*) \to \mathrm{Jac}(X)\to 0
\end{equation}
making $H^1(X,\mathbb C^*)$ into a principal $\mathbb C^g$-bundle over the Jacobian of the curve.

Even simpler is the case when $X$ is an elliptic curve: we get a $\mathbb C$-bundle over the curve
$X\simeq \mathrm{Jac}(X)$ itself. One may compactify it by adding a section at infinity in order to get
a $\mathbb P^1$-bundle $P\to X$: it is one of the two undecomposable $\mathbb P^1$-bundles found 
by Atiyah in \cite{Atiyah}. The section at infinity $\sigma:X\to P$ is the unique section whose image $\Sigma:=\sigma(X)$
has zero self-intersection in the total space $S:=\mathrm{Tot}(P)$ of the bundle. 
The group $H^1(X,\mathbb C^*)$ identifies, as a variety, with the complement $S\setminus\Sigma$.

Viewing $X$ as a Riemann surface, we get the Riemann-Hilbert correspondence 
$$RH\ :\ H^1(X,\mathbb C^*)\longrightarrow \mathrm{Hom}(\pi_1(X),\mathbb C^*)$$
which associates to a local system its monodromy representation. Once we choose 
a system of generators for the fundamental group, we get an identification between
the space of representations and the group $\mathbb C^*\times\mathbb C^*$.
We thus get a complex analytic group isomorphism
\begin{equation}\label{Eq:RHaffineRk1}
H^1(X,\mathbb C^*)\longrightarrow \mathbb C^*\times\mathbb C^*.
\end{equation}
However, the two underlying varieties are not isomorphic from the algebraic point of view:
 while $\mathbb C^*\times\mathbb C^*$
is an affine variety, there is no non constant regular functions on $S$.
Indeed, such a function would extend as a rational function on $S$ with polar divisor $n\Sigma$;
one easily check that the zero divisor of such function should define a complete curve not intersecting $\Sigma$.
But there is no complete curve in $S$ since it is analytically isomorphic to 
an affine variety. This famous example is due to Serre.

There is a natural non degenerate $2$-form $\omega$ on $S$ constructed from the 
exact sequence (\ref{Eq:ExactSeqRk1}) by choosing non zero $1$-forms on the base $X$ and on the fiber;
it is well-defined up to a scalar constant. Isomorphism (\ref{Eq:RHaffineRk1}) sends it to 
the $2$-form $\frac{du}{u}\wedge\frac{dv}{v}$ (up to a scalar). The $2$-form $\omega$ extends on $S$
as a rational $2$-form with divisor $2\Sigma$.

Let us briefly mention two other facts. The maximal compact subgroup given by unitary connections/representation
is an analytic real torus providing a real section of $P\to X$, and turns to be real algebraic on representation-side,
namely $\mathbb S^1\times\mathbb S^1\subset\mathbb C^*\times\mathbb C^*$. The fibration $P\to X$
is sent to the foliation defined by $ u\partial_u+\tau v\partial_v$ where $(1:\tau)$ is commensurable
to periods of $X$.  

One can modify Serre's example as follows. The elliptic involution $\iota:X\to X$ lifts-up as a biregular involution
$\varphi:S\to S$.
This follows from the unicity of the undecomposable bundle $P$, or equivalently from the natural
action of the involution $\iota$ on local systems. The quotient surface $\underline{S}$ 
fibers over $X/\iota\simeq\mathbb P^1$.
The involution $\varphi$ has $2$ fixed points over each of the $4$ Weierstrass points: one of them lie
on the section at infinity $\Sigma$ (which is invariant) and the other one is corresponding to a $2$-torsion 
point of $H^1(X,\mathbb C^*)$. We thus get $8$ conic singular points on the quotient that we have to blow-up.
The resulting surface $\hat{\underline{S}}\to \underline{S}$ is equipped with the quotient $2$-form $\underline{\omega}$
having polar divisor $2\hat{\underline{\Sigma}}+E_1+E_2+E_3+E_4$ where $E_i$ are exceptional divisors
of conic points along $\underline{\Sigma}$ (the image of $\Sigma$); $\underline{\omega}$
is non degenerate on the open set $M:=\hat{\underline{S}}\setminus(\hat{\underline{\Sigma}}\cup E_1\cup E_2\cup E_3\cup E_4)$.
This open set is no more a group, but share all other nice properties with Serre example.
For instance, the divisor $2\hat{\underline{\Sigma}}+E_1+E_2+E_3+E_4$ at infinity is (from the numerical point of view) 
like a degenerate elliptic fiber of type $I_0^*$ in Kodaira list,
there is no non constant regular function on $M$ and it is analytically conjugated to an affine surface, namely the Cayley cubic, quotient of $\mathbb C^*\times\mathbb C^*$
by the involution $(u,v)\mapsto(\frac{1}{u},\frac{1}{v})$.
This second example is actually a particular case of moduli space of connections arising as space of initial conditions of one of the Painlev\'e VI
equations, namely that one found by Picard.

These nice properties are common to general moduli spaces of connections (except structure of group, particular for the rank one) and this is a good motivation
to study them. On the other hand, they naturally appear as spaces of initial conditions for isomonodromy
equations, but we skip this point of view from our dicussion. 

\section{Moduli space of fuchsian systems}\label{sec:FuchsianSyst}

Consider a $\mathfrak{sl}_2$-fuchsian system on the Riemann sphere with $4$ poles
\begin{equation}\label{Eq:FuchSyst}
\frac{dY}{dx}=\frac{A_1}{x-t_1}+\frac{A_2}{x-t_2}+\frac{A_3}{x-t_3}+\frac{A_4}{x-t_4}
\end{equation}
with constant matrices $A_i$, $i=1,2,3,4$, satisfying 
\begin{equation}\label{Eq:FuchSyst2}
A_i=\begin{pmatrix}a_i&b_i\\ c_i&-a_i\end{pmatrix}\in\mathfrak{sl}_2(\mathbb C)\ \ \ \text{and}\ \ \ A_1+A_2+A_3+A_4=0
\end{equation}
(here and after we assume all $4$ poles $t_i$'s in the affine part for simplicity).
The group $\mathrm{SL}_2(\mathbb C)$ acts on the $Y$-variable and thus on residues $A_i$'s by simultaneous conjugacy.
Indeed, change of variable $Y=MY'$ induces change of residues ${A}_i'=M^{-1}A_iM$.
The spectrum of each matrix $A_i$ is preserved by this action. Let us fix the spectral data
\begin{equation}\label{Eq:FuchSystSpectral}
-\det(A_i)=a_i^2+b_ic_i=\theta_i^2
\end{equation}
for some $\boldsymbol{\theta}=(\theta_1,\theta_2,\theta_3,\theta_4)$ (eigenvalues are $\pm\theta_i$).
We want to describe the quotient space $\Fuchs^{\boldsymbol{\theta}}(X,D)$ of those systems by this action:
\begin{equation}\label{Eq:DefFuchSystQuotient}
\left\{(A_1,A_2,A_3,A_4)\in(\mathfrak{sl}_2(\mathbb C))^4\ ;\ \sum_iA_i=0,\ \det(A_i)=-\theta_i^2\right\}/_{\mathrm{SL}_2(\mathbb C)}.
\end{equation}
A straightforward computation shows that $\Fuchs^{\boldsymbol{\theta}}(X,D)$ is expected
to be a surface (depending on $4$ parameters $\theta_i$'s). However, it is non Hausdorff (as a topological space) 
in general. 
For instance, the orbit of the triangular system 
\begin{equation}\label{Eq:FuchSystTriang}
A_i=\begin{pmatrix}\theta_i&b_i\\ 0&-\theta_i\end{pmatrix}
\end{equation}
under diagonal conjugacy is not closed; its closure contains the diagonal system (with all $b_i$'s vanishing)
so that they define infinitesimally closed points in the quotient. Note however that, in this case, we have $\theta_1+\theta_2+\theta_3+\theta_4=0$. 

Introduce the following functions on $\Fuchs^{\boldsymbol{\theta}}(X,D)$:
\begin{equation}\label{Eq:FuchSystInvDef}
\begin{matrix}X_1:=\det(A_2+A_3),\ X_2:=\det(A_1+A_3),\ X_3:=\det(A_1+A_2)\\
\text{and}\ \ \ Y=\mathrm{tr}\left(A_1[A_2,A_4]\right)\end{matrix}
\end{equation}
where $[A,B]=AB-BA$ is the Lie bracket. These functions are clearly invariant under $\mathrm{SL}_2(\mathbb C)$-action.

\begin{prop}\label{Prop:FuchsInv}
Assume $\theta_4\not=0$. Then, the map
\begin{equation}\label{Eq:FuchSystInvMap}
(X_1,X_2,X_3,Y)\ :\ \Fuchs^{\boldsymbol{\theta}}(X,D)\longrightarrow\mathbb C^4
\end{equation}
sends the moduli space onto the affine cubic surface $S^{\boldsymbol{\theta}}$ defined by 
\begin{equation}\label{Eq:FuchSystCubic}
\begin{matrix}
X_1+X_2+X_3 = \theta_1^2+\theta_2^2+\theta_3^2+\theta_4^2\ \ \ \text{and}\\
\frac{Y^2}{4}+X_1X_2X_3+(\theta_1^2-\theta_3^2)(\theta_2^2-\theta4^2)X_1+(\theta_2^2-\theta_3^2)(\theta_1^2-\theta4^2)X_2\\
=(\theta_1^2+\theta_2^2-\theta_3^2-\theta4^2)(\theta_1^2\theta_2^2-\theta_3^2\theta4^2)
\end{matrix}
\end{equation}
The map (\ref{Eq:FuchSystInvMap}) above is one-to-one over the smooth part of $S^{\boldsymbol{\theta}}$.
Singularities arise when
\begin{itemize}
\item $\theta_i=0$ and $A_i=0$ for $i=1,2,3$;
\item $\pm\theta_1\pm\theta_2\pm\theta_3\pm\theta_4=0$ and all $A_i$'s are simultaneously triangular up to conjugacy.
\end{itemize}
In particular, apart from special values of $\boldsymbol{\theta}$ listed just before, the quotient $\Fuchs^{\boldsymbol{\theta}}(X,D)$
is Hausdorff and is isomorphic to the smooth cubic surface $S^{\boldsymbol{\theta}}$.
\end{prop}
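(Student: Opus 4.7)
The plan is to reduce Proposition \ref{Prop:FuchsInv} to classical invariant theory for the simultaneous $\mathrm{SL}_2(\mathbb{C})$-conjugation action on triples in $\mathfrak{sl}_2$. Eliminating $A_3=-(A_1+A_2+A_4)$, I view $\Fuchs^{\boldsymbol{\theta}}(X,D)$ as a quotient of triples $(A_1,A_2,A_4)$; by the Procesi-Sibirsky theorem the invariant ring is generated by the bilinear pairings $\tr(A_1A_2)$, $\tr(A_1A_4)$, $\tr(A_2A_4)$ together with the single cubic invariant $\tr(A_1A_2A_4)$, subject to one relation expressing the square of the cubic generator as a polynomial in the pairings and in the fixed data $\tr(A_i^2)=2\theta_i^2$. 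The Cayley-Hamilton identity $A^2=-\det(A)I$ for $A\in\mathfrak{sl}_2$ polarises to $AB+BA=\tr(AB)I$, from which one extracts the two key translations $\det(A+B)=\det(A)+\det(B)-\tr(AB)$ and $\tr(A_1[A_2,A_4])=2\tr(A_1A_2A_4)$. Consequently each $X_k$ is an affine-linear function of one pairing $\tr(A_iA_j)$, and $Y$ is twice the cubic Procesi generator; the hypothesis $\theta_4\neq 0$ enters to keep this reduction non-degenerate.

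With this dictionary, the first equation of (\ref{Eq:FuchSystCubic}) follows from $\tr\bigl((\sum_i A_i)^2\bigr)=0$, which expresses $\sum_{i<j}\tr(A_iA_j)$ linearly in the $\theta_i^2$; substituting yields $X_1+X_2+X_3=\sum_i\theta_i^2$ after a routine rearrangement. For the cubic relation one writes $Y^2=4\tr(A_1A_2A_4)^2$, applies the Procesi identity to reduce the right-hand side to a polynomial in the $\tr(A_iA_j)$ and $\theta_i^2$, re-expresses $\tr(A_iA_4)$ using the constraint $A_4=-A_1-A_2-A_3$ and the first relation, and collects terms to recover exactly (\ref{Eq:FuchSystCubic}). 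This is the main computational obstacle of the proof: it is purely formal polynomial manipulation but requires careful bookkeeping, and the asymmetric appearance of the indices in (\ref{Eq:FuchSystCubic}) simply reflects the choice of $A_4$ as the distinguished residue.

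For bijectivity I would invoke the standard theorem (Artin, Procesi) that over the \emph{absolutely irreducible} locus---tuples with no common invariant line in $\mathbb{C}^2$---the $\mathrm{SL}_2(\mathbb{C})$-orbit is closed and uniquely determined by its trace invariants. Injectivity there is then immediate, and surjectivity onto the smooth part of $S^{\boldsymbol{\theta}}$ is obtained by explicit reconstruction: normalise $A_1=\mathrm{diag}(\theta_1,-\theta_1)$ when $\theta_1\neq 0$ (the nilpotent case being similar), and read off the entries of $A_2,A_4$ from the three pairings and the value of $Y$. Failure of irreducibility occurs in exactly two ways. First, if $\theta_i=0$ for some $i\in\{1,2,3\}$, then $A_i$ can be a non-zero nilpotent whose $\mathrm{SL}_2(\mathbb{C})$-orbit has $0$ in its closure: all such non-closed orbits collapse onto the orbit $A_i=0$, yielding the first bullet. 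Second, if the four residues share a common eigenline they simultaneously upper-triangularise with diagonal entries $\epsilon_i\theta_i$, $\epsilon_i\in\{\pm 1\}$, and the constraint $\sum_iA_i=0$ forces the resonance $\sum_i\epsilon_i\theta_i=0$, giving the second bullet.

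A direct Jacobian computation on (\ref{Eq:FuchSystCubic}) finally shows that $S^{\boldsymbol{\theta}}$ is singular precisely at the images of these degenerate orbits. Hence the invariant map is an isomorphism from the Hausdorff part of $\Fuchs^{\boldsymbol{\theta}}(X,D)$ onto the smooth locus of $S^{\boldsymbol{\theta}}$; when $\boldsymbol{\theta}$ avoids the resonance hyperplanes $\{\pm\theta_1\pm\theta_2\pm\theta_3\pm\theta_4=0\}$ and the coordinate hyperplanes $\{\theta_i=0\}$, both degeneracies are empty and one obtains the final isomorphism between $\Fuchs^{\boldsymbol{\theta}}(X,D)$ and the smooth affine cubic $S^{\boldsymbol{\theta}}$.
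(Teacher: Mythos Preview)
Your argument is essentially correct and is a genuine alternative to the paper's proof. The paper does not invoke Procesi--Sibirsky or Artin--Procesi at all; instead it uses the hypothesis $\theta_4\neq 0$ from the very first line to put $A_4$ in diagonal form $\mathrm{diag}(\theta_4,-\theta_4)$, reducing the residual gauge freedom to the diagonal torus. The remaining invariants are then the entries $a_1,a_2$ and the monomials $b_ic_j$, and the paper computes directly that $X_1,X_2,X_3$ are affine in $a_1,a_2,a_3$ while $Y=4\theta_4(b_1c_2-b_2c_1)$; the cubic relation is nothing but the elementary identity $(b_1c_1)(b_2c_2)=(b_1c_2)(b_2c_1)$ rewritten in the $X_i,Y$. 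Injectivity is then a bare-hands reconstruction of $(b_1,b_2,c_1,c_2)$ up to scaling from these monomials, which visibly fails exactly when $b_1=b_2=0$ or $c_1=c_2=0$, i.e.\ in the simultaneously triangular case.

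Your invariant-theoretic route buys generality and a conceptual explanation of why there is exactly one cubic relation, at the cost of the polynomial bookkeeping you flag and of importing nontrivial theorems. Two small points to tighten. First, in your sketch the hypothesis $\theta_4\neq 0$ is invoked vaguely (``to keep this reduction non-degenerate''); in fact your dictionary $X_k\leftrightarrow \tr(A_iA_j)$, $Y\leftrightarrow 2\,\tr(A_1A_2A_4)$ is an invertible affine change independently of $\theta_4$, so the hypothesis is really only used to exclude the extra degeneracy $A_4$ nilpotent (hence the singular locus listed in the statement involves $\theta_i=0$ only for $i=1,2,3$). Second, for the explicit reconstruction you propose to normalise $A_1=\mathrm{diag}(\theta_1,-\theta_1)$; since only $\theta_4\neq 0$ is assumed you should diagonalise $A_4$ instead, which is precisely what the paper does.
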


\begin{proof}Since $\theta_4\not=0$, we can assume $A_4=\begin{pmatrix}\theta_4&0\\0&-\theta_4\end{pmatrix}$.
This normalization is well-defined up to diagonal conjugacy so that the monomials $a_i$ and $b_ic_j$ are invariant.
Using $A_1+A_2+A_3+A_4=0$, we can now express $A_3$ in function of $A_1$ and $A_2$:
$$\left\{\begin{matrix}a_3&=&-a_1-a_2-\theta_4\\ b_3&=&-b_1-b_2\\ c_3&=&-c_1-c_2\end{matrix}\right.$$
Spectral data (\ref{Eq:FuchSystSpectral}) gives the following conditions
\begin{equation}\label{Eq:FuchSystInvRelat}
\begin{matrix}
a_1^2+b_1c_1=\theta_1^2,\ \ \ a_2^2+b_2c_2=\theta_2^2\ \ \ \text{and}\\
2a_1a_2+2\theta_4(a_1+a_2)+b_1c_2+b_2c_1+\theta_1^2+\theta_2^2-\theta_3^2+\theta4^2=0.
\end{matrix}
\end{equation}
On the other hand, we get 
\begin{equation}\label{Eq:FuchSystInvExplicitxi}
\begin{matrix}X_1&=&2\theta_4a_1+\theta_4^2+\theta_1^2\\ X_2&=&2\theta_4a_2+\theta_4^2+\theta_2^2\\ X_3&=&2\theta_4a_3+\theta_4^2+\theta_3^2\end{matrix}
\ \ \ \text{and}\ \ \ Y=4\theta_4(b_1c_2-b_2c_1)
\end{equation}
Once we know $X_1,X_2,X_3,Y$, we get the invariants $a_1$, $a_2$ and $b_1c_2-b_2c_1$.
From relations (\ref{Eq:FuchSystInvRelat}), we promptly deduce $b_1c_1$, $b_2c_2$ and $b_1c_2+b_2c_1$,
and therefore $b_1c_2$ and $b_2c_1$.
The cubic equation (\ref{Eq:FuchSystCubic}) just says that these invariants satisfy the obvious relation
$(b_1c_1)(b_2c_2)=(b_1c_2)(b_2c_1)$. We can thus recover $(b_1,b_2,c_1,c_2)$ uniquely up to the diagonal action,
except when either $b_1=b_2=0$, or $c_1=c_2=0$; in these latter cases, there are several possible solutions
$(A_1,A_2,A_3)$ up to diagonal action, but all of them are triangular.
\end{proof}

\section{Moduli space of connections}

Fix $D=t_1+\cdots+t_n$ a reduced divisor (of poles) on the Riemann sphere $X:=\mathbb P^1$ with $n\ge4$.
Fix eigenvalues $\{\theta_i^+,\theta_i^-\}\subset \mathbb C$ for each $i=1,\ldots,n$ with integral sum
$\sum_{i=1}^n\theta_i^++\theta_i^-=-d\in\mathbb Z$. For simplicity, assume generic condition 
$\theta_1^{\pm}+\cdots+\theta_n^{\pm}\not\in\mathbb Z$ for any choice of signs $\pm$
to avoid reducible connections. Denote by $\boldsymbol{\theta}=(\theta_1^{\pm},\ldots,\theta_n^{\pm})$ the spectral data.

Consider the triples $(E,\nabla,\boldsymbol{l})$ called ``parabolic connections'' where 
\begin{itemize}
\item $E$ is a rank $2$ vector bundle of degree $d$ over $\mathbb P^1$, 
\item $\nabla$ is a logarithmic connection $\nabla:E\to E\otimes\Omega^1(D)$
with polar divisor $D$, having residual eigenvalues $\theta_i^+$ and $\theta_i^-$ over the pole $t_i$,
\item $\boldsymbol{l}=(l_1,\ldots,l_n)$ is a parabolic structure on $E$
such that $l_i\subset E\vert_{t_i}$ belongs to the eigenspace generated by $\theta_i^+$.
\end{itemize}
When $\theta_i^+\not=\theta_i^-$, we note that the parabolic $l_i$ is {\bf the} eigendirection of $\theta_i^+$;
the parabolic structure  is therefore relevant only in case of equality $\theta_i^+=\theta_i^-$. 
Note also that Fuchs relation says that 
$\sum_{i=1}^n\theta_i^++\theta_i^-+\deg(E)=0$, which explains the constraints given by $d$ in the above definitions.

We say that two parabolic connections $(E,\nabla,\boldsymbol{l})$ and $(E',\nabla',\boldsymbol{l}')$
are equivalent when there is a bundle isomorphism $\phi:E\to E'$ conjugating connections and parabolic structures.
We denote by $\Con^{\boldsymbol{\theta}}(X,D)$ the moduli space of parabolic connections for this equivalence. 
This can be viewed
as a stack (see \cite{ArinkinLysenko}), but it can actually be constructed by GIT method (see \cite{IIS}); under our 
generic assumptions on $\boldsymbol{\theta}$, we simply get:

\begin{thm}[Inaba-Iwasaki-Saito]\label{thm:IIS}
The moduli space  $\Con^{\boldsymbol{\theta}}(X,D)$ is a smooth irreducible quasi-projective variety
of dimension $2(n-3)$ equipped with a regular symplectic $2$-form $\omega$.
\end{thm}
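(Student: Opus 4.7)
The plan is to construct $\Con^{\boldsymbol{\theta}}(X,D)$ via Geometric Invariant Theory, following Inaba-Iwasaki-Saito. First I would fix parabolic weights $\alpha=(\alpha_1,\ldots,\alpha_n)$ small and generic, and form the moduli functor of $\alpha$-stable parabolic connections. The genericity hypothesis $\theta_1^{\pm}+\cdots+\theta_n^{\pm}\not\in\mathbb{Z}$ is designed so that no parabolic connection admits a $\nabla$-invariant line subbundle: such a subbundle would, by the Fuchs relation, have degree equal to a signed sum of the $\theta_i^{\pm}$, contradicting non-integrality. Hence every parabolic connection is irreducible, automatically $\alpha$-stable, and there are no strictly semistable objects; the coarse GIT quotient is already a fine quasi-projective moduli space.

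For smoothness and the dimension, I would run standard deformation theory. Infinitesimal deformations of $(E,\nabla,\boldsymbol{l})$ are classified by $\mathbb{H}^1$ of the two-term complex
\begin{equation*}
K^{\bullet}:\quad \mathcal{E}nd^{par}(E,\boldsymbol{l})\xrightarrow{\nabla}\mathcal{E}nd^{par}(E,\boldsymbol{l})\otimes\Omega^1_X(D),
\end{equation*}
where the source consists of trace-free endomorphisms preserving the parabolic flags and the target is the same, but with residues required to be nilpotent along the flag so that the residual eigenvalues $\theta_i^{\pm}$ remain fixed. Obstructions lie in $\mathbb{H}^2(K^{\bullet})$; by Serre duality on $X=\mathbb{P}^1$ this is dual to $\mathbb{H}^0(K^{\bullet})$, which is the space of parabolic endomorphisms commuting with $\nabla$. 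Irreducibility reduces this to scalars, killed by the trace-free condition, so $\mathbb{H}^2=0$ and the moduli space is smooth. The dimension is then $-\chi(K^{\bullet})$, and a Riemann-Roch computation on $\mathbb{P}^1$ with the parabolic correction at each of the $n$ poles yields $2(n-3)$.

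For the symplectic form I would use the Atiyah-Bott/Goldman construction adapted to parabolic logarithmic connections: pair two tangent vectors in $\mathbb{H}^1(K^{\bullet})$ by cup-product composed with the trace, landing in $\mathbb{H}^2$ of the de Rham complex $\mathcal{O}_X\xrightarrow{d}\Omega^1_X$, which is $H^1(X,\Omega^1_X)\simeq\mathbb{C}$. The matched conditions ``preserve the flag'' in the source versus ``nilpotent along the flag'' in the target make the trace pairing perfect at each pole, so Serre duality for $K^{\bullet}$ yields non-degeneracy of $\omega$; closedness and algebraicity are standard cocycle verifications. Finally, irreducibility of the moduli space is obtained by Riemann-Hilbert, transporting it from the irreducible character variety of the $n$-punctured sphere with fixed semisimple conjugacy classes.

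The main technical obstacle is the parabolic bookkeeping in $K^{\bullet}$: one must arrange the source and target sheaves so that simultaneously $\mathbb{H}^0$ recovers exactly the parabolic infinitesimal automorphisms of $(\nabla,\boldsymbol{l})$, the trace pairing is perfect at each pole (pairing the ``flag-preserving'' direction against the ``nilpotent-along-flag'' direction), and the Euler characteristic produces the parabolic correction that brings the naive count $3(n-2)$ down to $2(n-3)$. Once these compatibilities are set up (which is the substantive content of \cite{IIS}), smoothness, dimension, and the symplecticity of $\omega$ all follow formally from the general machinery.
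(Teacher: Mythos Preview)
The paper does not actually prove this statement: Theorem~\ref{thm:IIS} is quoted as a result of Inaba--Iwasaki--Saito \cite{IIS}, followed only by a short informal remark about the role of weights and stability. There is therefore no ``paper's own proof'' to compare your attempt against.

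That said, your sketch is a fair outline of the strategy in \cite{IIS}: GIT construction with parabolic weights, the observation that the genericity hypothesis on $\boldsymbol{\theta}$ rules out invariant line subbundles (hence all objects are stable), smoothness and dimension via the hypercohomology of the parabolic Atiyah complex together with Serre duality, and the symplectic form via the trace pairing on $\mathbb{H}^1$. One caution: invoking the Riemann--Hilbert correspondence to deduce irreducibility of the moduli space is circular in spirit, since the analytic isomorphism with the character variety is itself one of the main outputs of \cite{IIS}; in that reference irreducibility is obtained directly on the de~Rham side. But as a summary of the cited result your outline is accurate, and for the purposes of this paper the theorem is simply a black box.
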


The construction needs a choice of weights to impose stability condition; all parabolic connections
are stable under our assumptions and the resulting quotient does not actually depends on this choice.
However, if we allow non generic eigenvalues, then some reducible connections become unstable, and the 
quotient of semi-stable points depends on the choice of weights. Note also that we could have avoid 
parabolic structure in the discussion by assuming also $\theta_i^+\not=\theta_i^-$, but we will use 
parabolics later; this is why we already introduce this notion.

\subsection{Connection matrix}\label{subsec:ConMatrix}
Let us now be more explicit for readers that are not familiar to connections.
Following Birkhoff, any rank vector bundle $E$ on $\mathbb P^1$ splits as a direct sum of line bundles.
In particular, for the rank $2$ case, we have 
$$E=\mathcal O(d_1)\oplus\mathcal O(d_2)\ \ \ \text{with}\ \ \ d_1\le d_2\ \ \ \text{and}\ \ \ d_1+d_2=d.$$
Choose sections $e_i$ of $\mathcal O(d_i)$ whose divisor (zero or pole) is supported by $\infty\in\mathbb P^1$
so that, over the affine part $\mathbb C=\mathbb P^1\setminus\{\infty\}$, the vector bundle is trivial: $E\vert_{\mathbb C}=\mathbb C e_1\oplus\mathbb C e_2$.
We can describe the connection $\nabla$ as follows:
$$\nabla\vert_{\mathbb C}:Y\mapsto dY+\Omega\cdot Y$$
where $Y$ is a section of $E\vert_{\mathbb C}$ and $\Omega$ is a $2\times2$-matrix of meromorphic $1$-forms:
\begin{equation}\label{Eq:MatrixCon}
\Omega=\sum_{i=1}^n\frac{A_i}{x-t_i}dx+B(x)dx
\end{equation}
with $B$ holomorphic (and $x$ the affine variable of $\mathbb C=\mathbb P^1\setminus\{\infty\}$). 
Let $e_i'$ be sections of $\mathcal O(d_i)$ defined by $e_i=x^{d_i}e_i'$. At infinity, we must express
the connection in the basis $(e_1',e_2')$ and, setting
$$Y=MY'\ \ \ \text{with}\ \ \ M=\begin{pmatrix}x^{d_1}&0\\0&x^{d_2}\end{pmatrix}$$
we find
$$\nabla\vert_{\mathbb P^1\setminus\{0\}}:Y'\mapsto dY'+\Omega'\cdot Y'\ \ \ \text{with}\ \ \ \Omega'=M^{-1}\Omega M+M^{-1}dM$$
which must be holomorphic at $\infty$. If we now write $\Omega=A(x)\frac{dx}{\prod_{i=1}^n(x-t_i)}$, this holomorphy
conditions says that
$$A(x)=-\begin{pmatrix}{d_1}&0\\0&{d_2}\end{pmatrix}x^{n-1}+\begin{pmatrix}a(x)&b(x)\\c(x)&d(x)\end{pmatrix}$$
$$\text{with}\ \ \ 
\left\{\begin{matrix}b\ \text{polynomial of degree}\ \le n-2-(d_2-d_1) \\ 
a,d\ \text{polynomials of degree}\ \le n-2\hfill\\ c\ \text{polynomial of degree}\ \le n-2+(d_2-d_1)\end{matrix}\right.$$
Fuchsian systems are just logarithmic connections on the trivial bundle $d_1=d_2=0$.

\begin{example}[Case $n=4$]\label{ex:NonTrivBunCon}
When $E=\mathcal O\oplus\mathcal O(1)$, then holomorphy at infinity gives the following constraints
$$A_1+A_2+A_3+A_4=\begin{pmatrix}0&0\\ \star&-1\end{pmatrix}\ \ \ \text{and}\ \ \ B(x)\equiv 0.$$
When $E=\mathcal O(-1)\oplus\mathcal O(1)$, then we get constraints
$$A_1+A_2+A_3+A_4=\begin{pmatrix}1&0\\ \star&-1\end{pmatrix},\ 
B(x)\equiv\begin{pmatrix}0&0\\ \star&0\end{pmatrix}\
\text{(constant matrices)}$$
$$\text{and}\ \ \ \sum_i\frac{A_i}{x-t_i}=\begin{pmatrix}\star&\frac{b}{\prod_i(x-t_i)}\\ \star&\star\end{pmatrix}.$$
Finally, when $d_2-d_1>2$, the $[1,2]$-coefficient of all $A_i$'s and $B(x)$ must be zero so that we are in the reducible
case. In other words, under our generic assumption on $\boldsymbol{\theta}$, we have $d_2-d_1=0,1,2$.
For degree $d=0$ or $1$, we are led to the following possibilities
\begin{itemize}
\item $d=0$ and $(d_1,d_2)=(0,0)$ or $(-1,1)$,
\item $d=1$ and $(d_1,d_2)=(0,1)$.
\end{itemize}
\end{example}

We now impose spectral data by 
$$\mathrm{tr}(A_i)=\theta_1^++\theta_i^-\ \ \ \text{and}\ \ \ \det(A_i)=\theta_1^+\cdot\theta_i^-.$$
The parabolic structure is therefore given by 
$$l_i=\mathbb C\cdot\begin{pmatrix}b_i\\ \theta_i^+-a_i\end{pmatrix}\ \ \ \text{or}\ \ \ 
\mathbb C\cdot\begin{pmatrix} \theta_i^+-d_i\\ c_i\end{pmatrix}\ \ \ \text{where}\ \ \ 
A_i=\begin{pmatrix}a_i&b_i\\ c_i&d_i\end{pmatrix}$$
except when $\theta_i^+=\theta_i^-$ and $A_i$ is a scalar matrix, in which case $l_i$
may be arbitrarily choosen: it is an extra data in this case.

\subsection{Bundle automorphisms}
So far, we have described the space of parabolic connections up to Birkhoff normalization 
of the bundle. Then, we would like to quotient by the action of bundle automorphisms on connections.
They can be described as follows.
\begin{itemize}
\item When $d_1=d_2$, then $E$ is just the twist of the trivial bundle by $\mathcal O(d_1)$
and automorphisms are the same: $\mathrm{GL}_2(\mathbb C)$ acts by conjugacy on the connection matrix (\ref{Eq:MatrixCon}),
i.e. on matrices $A_i$'s ($B$ is scalar and the action is trivial on it).
\item When $d_1<d_2$, then the automorphism group is given in the trivialization chart $E\vert_{\mathbb C}=\mathbb C e_1\oplus\mathbb C e_2$ by
$$\left\{M=\begin{pmatrix}\lambda_1&0\\ f(x)&\lambda_2\end{pmatrix}\ \text{with}\ f(x)\ \text{polynomial of degree}\ \le d_2-d_1\right\},$$
and it is acting on connection matrix (\ref{Eq:MatrixCon}) by 
$$\Omega\mapsto \Omega'=M^{-1}\Omega M+M^{-1}dM.$$
\end{itemize}
The action on parabolics is obvious.

\begin{example}\label{ex:ModuliConF2}
Assume $n=4$ and $(d_1,d_2)=(-1,1)$; for simplicity, assume also  $(\theta_i^+,\theta_i^-)=(\theta_i,-\theta_i)$.
The matrix connection writes $\Omega=A(x)\frac{dx}{\prod_i(x-t_i)}$ where
$$A(x)=\begin{pmatrix}x^3+a(x)&b\\c(x)&-x^3-a(x)\end{pmatrix} \ \ \ \text{with}\ \ \ 
\left\{\begin{matrix}b\in\mathbb C\hfill\\ a\ \text{polynomial of degree}\ \le2\\ 
c\ \text{polynomial of degree}\ \le4\end{matrix}\right.$$
Note that $b\not=0$ otherwise the connection would be reducible.
Conjugating by a diagonal matrix $M$, we can assume $b=1$. The action of unipotent isomorphisms is described by
$$M=\begin{pmatrix}1&0\\f(x)&1\end{pmatrix}\ :\ \ \ \begin{pmatrix}x^3+a&1\\c&-x^3-a\end{pmatrix}\mapsto
\begin{pmatrix}x^3+a+f&1\\c-2(x^3+a)f-f^2+f'\hskip0.5cm&-x^3-a-f\end{pmatrix}$$
where $f$ is any degree $2$ polynomial. Choosing $f=-a$, we  derive a unique normal form
$$\Omega=\begin{pmatrix}x^3&1\\\tilde{c}(x)&-x^3\end{pmatrix}\frac{dx}{\prod_i(x-t_i)}
\ \ \ \text{where}\ \ \ 
\frac{\tilde{c}(x)}{\prod_i(x-t_i)}=c_0+\sum_i\frac{\tau_i\theta_i^2}{x-t_i}
\ \ \ \text{with}\ \ \ c_0\in\mathbb C$$
and constants $\tau_i$ are defined by
\begin{equation}\label{def:tau_i}
\frac{1}{\prod_i(x-t_i)}=\sum_i\frac{1}{\tau_i(x-t_i)},\ \text{i.e.}\ \tau_i=\prod_{j\not=i}(t_i-t_j).
\end{equation}
It follows that the moduli space of connections {\bf on the fixed bundle} $E=\mathcal O(-1)\oplus\mathcal O(1)$
is the affine line $\mathbb A^1\ni c_0$ (extra parameter in the above normal form). 
To get the full moduli space $\Con^{\boldsymbol{\theta}}(\mathbb P^1,\{t_1,t_2,t_3,t_4\})$ of degree $0$ bundles with the same eigenvalues (satisfying $(\theta_i^+,\theta_i^-)=(\theta_i,-\theta_i)$ and 
$\pm\theta_1\pm\theta_2\pm\theta_3\pm\theta_4\not\in\mathbb Z$), we have to patch together
this affine line $\mathbb A^1$ with the affine cubic surface $S^{\boldsymbol{\theta}}$ of Fuchsian systems given by Proposition \ref{Prop:FuchsInv}
(see remark ending example \ref{ex:NonTrivBunCon}).
Following Theorem \ref{thm:IIS}, one should obtain a smooth irreducible quasi-projective surface.
\end{example}

\subsection{Isomorphisms between moduli spaces}

There are many isomorphisms between moduli spaces of connections $\Con^{\boldsymbol{\theta}}(X,D)$
modifying spectral data. First, one can {\bf twist} a parabolic connection $(E,\nabla,\boldsymbol{l})$
by a rang $1$ logarithmic connection $(L,\zeta)$:
\begin{itemize}
\item $L$ is a line bundle of degree $k$, $L=\mathcal O(k)$,
\item $\zeta:L\to L\otimes\Omega^1(D)$ is determined by its eigenvalues
$\lambda_i$ satisfying 
$$\lambda_1+\cdots+\lambda_n+k=0.$$
\end{itemize}
The action on connection matrix (\ref{Eq:MatrixCon}) is given by
$$\Omega\mapsto\Omega+\sum_i\begin{pmatrix}\lambda_i&0\\0&\lambda_i\end{pmatrix}\frac{dx}{x-t_i}$$
and this defines an isomorphism
\begin{equation}\label{Eq:TwistIsom}
\Con^{\boldsymbol{\theta}}(X,D)\stackrel{\otimes(L,\zeta)}{\longrightarrow}\Con^{\boldsymbol{\theta}'}(X,D)\ \ \ \text{where}\ \ \ 
\left\{\begin{matrix}(\theta_i^+)'=\theta_i^++\lambda_i\\ (\theta_i^-)'=\theta_i^-+\lambda_i\end{matrix}\right.
\end{equation}
The degree is changed by $d\mapsto d'=d+2k$. In case of even degree, we can assume up to such an isomorphism
that $d=0$ and moreover $\theta_i^++\theta_i^-=0$ for all $i$ (we get a $\mathfrak{sl}_2$-connection). In the odd case,
we can assume $d=1$; then, in case $n=4$, necessarily $E=\mathcal O\oplus\mathcal O(1)$
(see remark ending example \ref{ex:NonTrivBunCon}).

The other family of isomorphisms comes from {\bf elementary transformations} of parabolic bundles.
Given a parabolic bundle $(E,\boldsymbol{l})$ over $(X,D)$, then we define 
$(E',\boldsymbol{l}'):=\mathrm{Elm}_{t_i}^-(E,\boldsymbol{l})$ by 
\begin{itemize}
\item $E'$ is the vector bundle defined by the subsheaf $E'\subset E$ generated by those sections directed by $l_i$,
i.e. defined by the exact sequence of morphisms of sheaves
$$0\to E'\to E\to E/_{l_i}\to 0$$
where $l_i$ is viewed as a sky-scrapper sheaf.
\item the new parabolic direction $l_i'\subset E'\vert_{t_i}$ is defined by the kernel of the inclusion morphism 
$E'\to E$.
\end{itemize}
A connection $\nabla$ on $E$ induces a connection $\nabla'$ on the subsheaf $E'\subset E$;
if $(E,\nabla,\boldsymbol{l})$ is parabolic, $\nabla$ preserves the direction $l_i$ and $\nabla'$
is still parabolic with respect to $(E',\boldsymbol{l}')$. Over $t_i$, the new eigenvalues are
$(\theta_i^+,\theta_i^-)':= (\theta_i^-+1,\theta_i^+)$ ($l_i'$ is now in the eigenspace corresponding to $\theta_i^-+1$).
Over other $t_j$'s, the morphism $E'\to E$ is a local bundle isomorphism
and the eigenvalues remain unchanged. This operation induces a map
\begin{equation}\label{Eq:ElmIsom}
\Con^{\boldsymbol{\theta}}(X,D)\stackrel{\mathrm{Elm}_{t_i}^-}{\longrightarrow}\Con^{\boldsymbol{\theta}'}(X,D)
\ \ \ \text{where}\ \ \ 
\left\{\begin{matrix}(\theta_i^+)'=\theta_i^-+1\\ (\theta_i^-)'=\theta_i^+\\ \text{all other}\ (\theta_j^\pm)'=\theta_j^\pm\end{matrix}\right.
\end{equation}
In local trivialization of the bundle $E$ around $t_i$, with basis $(e_1,e_2)$ such that $e_1$ generates the parabolic direction 
$l_i$ at $t_i$, we get a parabolic connection matrix of the form 
$$\Omega=\begin{pmatrix}\theta_i^+&\star\\0&\theta_i^-\end{pmatrix}\frac{dx}{x-t_i}+\text{holomorphic matrix,}
\ \ \ \text{and}\ \ \ l_i=\mathbb C\begin{pmatrix}1\\0\end{pmatrix}.$$
The new vector bundle $E'$ is generated near $t_i$ by $(e_1,(x-t_i)e_2)$ so that the connection matrix of $\nabla'$
is given by 
$$\Omega'=M^{-1}\Omega M+M^{-1}dM\ \ \ \text{where}\ \ \ 
M=\begin{pmatrix}1&0\\ 0&\hskip0.5cm  x-t_i\end{pmatrix}$$
which gives
$$\Omega'=\begin{pmatrix}\theta_i^+&0\\ \star&\hskip0.5cm  \theta_i^-+1\end{pmatrix}\frac{dx}{x-t_i}+\text{holomorphic matrix,}
\ \ \ \text{and}\ \ \ l_i=\mathbb C\begin{pmatrix}0\\1\end{pmatrix}.$$
One easily check that applying twice $\mathrm{Elm}_{t_i}^-$ gives the twist by the unique logarithmic connection 
on $\mathcal O(-1)$ having a single pole (with residue $+1$) at $t_i$:
$$\mathrm{Elm}_{t_i}^-\circ \mathrm{Elm}_{t_i}^-(E,\nabla,\boldsymbol{l})=(\mathcal O(-1),\frac{dx}{x-t_i})\otimes(E,\nabla,\boldsymbol{l}).$$
Note that $\mathrm{Elm}_{t_i}^-$ decreases the degree of the vector bundle by $-1$ (whence the sign);
one can also define $\mathrm{Elm}_{t_i}^+(E,\boldsymbol{l}):=(\mathcal O(-1),\frac{dx}{x-t_i})\otimes\mathrm{Elm}_{t_i}^-(E,\boldsymbol{l})$ increasing the degree of the vector bundle by $+1$, it is actually the inverse map
$$\mathrm{Elm}_{t_i}^+\circ \mathrm{Elm}_{t_i}^-(E,\nabla,\boldsymbol{l})\simeq(E,\nabla,\boldsymbol{l}).$$
In fact, all these isomorphisms commute together.

One can also define an isomorphism permuting $\theta_i^+$ and $\theta_i^-$, modifying the parabolic structure
(we switch to the other eigenspace) and get a non abelian group. We omit this from our discussion.
Finally, up to isomorphism, we can always assume degree $d=0$ and $\theta_i^++\theta_i^-=0$ for all $i$.
However, this may be not the best choice as we shall see.

\section{The Painlev\'e case $n=4$}

In order to get an explicit irreducible moduli space, it is obviously better to choose $d=1$
instead of $d=0$ since then, we can work on the single vector bundle $E=\mathcal O\oplus\mathcal O(1)$
(see remark ending example \ref{ex:NonTrivBunCon}).

\subsection{An explicit construction}\label{subsec:Explicit4}

We start proceeding like in example \ref{ex:ModuliConF2}. 
The connection matrix writes $\Omega=A(x)\frac{dx}{\prod_i(x-t_i)}$ where
$$A(x)=\begin{pmatrix}a(x)&b(x)\\c(x)&\hskip0.5cm-x^3+d(x)\end{pmatrix} \ \ \ \text{with}\ \ \ 
\left\{\begin{matrix}b\ \text{polynomial of degree}\ \le1\hfill\\ a,d\ \text{polynomials of degree}\ \le2\\ 
c\ \text{polynomial of degree}\ \le3\end{matrix}\right.$$ 
Again, $b(x)\not\equiv0$ otherwise the connection is reducible. Assume for the moment that $b(x)$ is not constant
so that we can normalize it to $b(x)=x-q$. The action by unipotent matrices is not enough to kill $a(x)$ however
(the freedom $f(x)$ like in example \ref{ex:ModuliConF2} is now of degree $1$).
The idea is to apply an elementary transformation $\mathrm{Elm}_{q}^+$ at the parabolic direction given by $\mathcal O(1)$.
Concretely, we apply the bundle birational isomorphism given by 
$$M=\begin{pmatrix}1&0\\0&\frac{1}{x-q}\end{pmatrix}$$
and obtain the new connection matrix
$$M^{-1}\Omega M+M^{-1}dM=\begin{pmatrix}\frac{a(x)}{\prod_i(x-t_i)}&\frac{1}{\prod_i(x-t_i)}\\ \frac{(x-q)c(x)}{\prod_i(x-t_i)}&\hskip0.5cm\frac{-x^3+d(x)}{\prod_i(x-t_i)}-\frac{1}{x-q}\end{pmatrix}dx$$
On the new bundle $E'=\mathcal O\oplus\mathcal O(2)$, we have more automorphisms
and we are able to kill the polynomial coefficient $a(x)$ by means of unipotent automorphisms. We now get the normal form
\begin{equation}\label{Eq:ConNormForm4}
\Omega'=\begin{pmatrix}0&\frac{1}{\prod_i(x-t_i)}\\ \frac{\tilde{c}(x)}{(x-q)\prod_i(x-t_i)}&\hskip0.5cm\frac{-x^3+\tilde{d}(x)}{\prod_i(x-t_i)}-\frac{1}{x-q}\end{pmatrix}dx
\end{equation}
with $\tilde{c}(x)$ polynomial of degree $5$. Taking into account the spectral data, we get
$$\Omega=\sum_i\begin{pmatrix}0&\frac{1}{\tau_i}\\ -\tau_i\theta_i^+\theta_i^-&\hskip0.5cm\theta_i^++\theta_i^-\end{pmatrix}\frac{dx}{x-t_i}
+\begin{pmatrix}0&0\\ p&-1\end{pmatrix}\frac{dx}{x-q}+\begin{pmatrix}0&0\\ c_0&0\end{pmatrix}dx$$
for constants $p,c_0\in\mathbb C$. Here, we have also assumed $q\in\mathbb C\setminus\{t_1,t_2,t_3,t_4\}$.
Indeed, when $q=t_i$, we apply an elementary transformation over the pole which might change the eigenvalues.
Finally, note that $x=q$ must be an apparent singular point, which means, by Fuchs' local theory, that around $x=q$
the kernel of the residual part is also an eigenvector for the constant part of the connection matrix: 
$$\Omega\cdot\begin{pmatrix}1\\ p\end{pmatrix}=\star
\begin{pmatrix}1\\ p\end{pmatrix}+o(x-q);$$
this fixes the value of $c_0$. Finally, the initial connection is determined by the parameters $(p,q)$ which 
have the following geometrical meaning:
\begin{itemize}
\item $q$ stands for the position of the apparent singular point of the new connection,
\item $p$ stands for the kernel of the residue of $\nabla$ over $x=q$.
\end{itemize}
Note also that, over $x=t_i$, the two eigendirections of $\nabla$ are given by 
$$\begin{pmatrix}1\\ \tau_i\theta_i^+\end{pmatrix}\ \ \ \text{and}\ \ \ \begin{pmatrix}1\\ \tau_i\theta_i^-\end{pmatrix}$$
corresponding respectively to eigenvalues $\theta_i^+$ and $\theta_i^-$.

Let us identify $\mathcal O(2)=\Omega^1(D)$, and consider the projectivization $\mathbb P(\mathcal O\oplus\Omega^1(D))$
of our vector bundle $E$ as the fiber-compactification of $\Omega^1(D)$: the line bundles in $E$ generated by 
$$\begin{pmatrix}1\\ 0\end{pmatrix},\ \begin{pmatrix}1\\ 1\end{pmatrix}\ \ \ \text{and}\ \ \ \begin{pmatrix}0\\ 1\end{pmatrix}$$
respectively correspond to the zero section, the section $\frac{dx}{\prod_i(x-t_i)}$ and the section at infinity.
If $S$ denotes the associated ruled surface
(total space of the projective bundle), then the total space of $\Omega^1(D)$ identifies with $S\setminus\Sigma$
where $\Sigma$ is the section at infinity. On fibers $F_i:\{x=t_i\}$, a natural affine coordinate if given by the residue:
the section $\frac{dx}{\prod_i(x-t_i)}$ has residue $\frac{1}{\tau_i}$ at $x=t_i$. Therefore, the two eigendirections
over $x=t_i$ correspond to the points $s_i^+,s_i^-\in F_i$ satisfying $x=t_i$ and 
$\mathrm{Res}_{t_i}(s_i^{\pm})=\theta_i^{\pm}$. Over $x=q$, the kernel of the residual matrix also
defines a point $s$:
$$s_i^{\pm}\ :\ \begin{pmatrix}1\\ \tau_i\theta_i^{\pm}\end{pmatrix}\ \ \ \text{and}\ \ \ s\ :\ \begin{pmatrix}1\\ p\prod_i(q-t_i)\end{pmatrix}.$$
We have just defined a birational map
$$\Con^{\boldsymbol{\theta}}(X,D)\dashrightarrow S\ ; \ (E,\nabla,\boldsymbol{l})\mapsto s.$$
Now consider the blow-up of the surface $\pi:\hat{S}\to S$ at all $8$ points $s_i^\pm$:
we denote by $E_i^\pm$ the exceptional divisors and $\hat{\Sigma}$, $\hat{F}_i$ 
the strict transforms.
When $\theta_i^+=\theta_i^-$, then first blow-up the point $s_i:=s_i^+=s_i^-$, and then 
blow-up the intersection point between the exceptional divisor $E_i$
and the strict transform of the fiber $F_i$; denote by $E_i'$ the last exceptional divisor.
Then the birational map above induces an isomorphism: 
$$\Con^{\boldsymbol{\theta}}(X,D)\stackrel{\sim}{\longrightarrow} M^{\boldsymbol{\theta}}:=\hat{S}\setminus\hat{\Sigma}\cup \hat{F}_1\cup \hat{F}_2\cup \hat{F}_3\cup \hat F_4.$$
For a general point $s\in M^{\boldsymbol{\theta}}$, formula (\ref{Eq:ConNormForm4})
gives a parabolic connection with an extra apparent singular point; after elementary transformation over $x=q$,
we get a parabolic connection in $\Con^{\boldsymbol{\theta}}(X,D)$. When $q\to \infty$, there is a limit point in 
$\Con^{\boldsymbol{\theta}}(X,D)$ provided that $s$ tends to an affine point of the fiber. When $q\to t_i$,
one can find a limit point in $\Con^{\boldsymbol{\theta}}(X,D)$ if, and only if, $s$ tends to one of the exceptional divisors,
not to $\hat F_i$; over the point $s_i^-$, we get all parabolic connections in $\Con^{\boldsymbol{\theta}}(X,D)$ such that
the parabolic $l_i$ lies in the destabilizing subbundle $\mathcal O(1)$.

With our notations, the $2$-form $dp\wedge dq$ has polar divisor $2\hat{\Sigma}+\hat F_1+ \hat F_2+ \hat F_3+ \hat F_4$
and is non zero at any other point: it defines a  non degenerate holomorphic volume form on the moduli space
that should better viewed as a symplectic structure.

\subsection{Description and properties of moduli spaces}

Fix $(X,D)=(\mathbb P^1,\{t_1,t_2,t_3,t_4\})$ as before and consider the Hirzebruch surface $S$
given by the total space of the projective bundle $\mathbb P(\mathcal O\oplus\Omega^1(D))$.
This surface may be obtained from the total space of the projective bundle $\mathbb P(\mathcal O\oplus\Omega^1)$
by applying $4$ elementary transformations directed by the section at infinity $\mathcal O$; from this point of view,
the Liouville form on the total space of $\Omega^1$ induces a rational $2$-form $\omega$ on $\mathbb P(\mathcal O\oplus\Omega^1(D))$ with polar divisor $2\Sigma+F_1+ F_2+ F_3+ F_4$: as before, $\Sigma$ is the section defined
by $\mathcal O$ and $F_i$ is the fiber over $t_i$. An affine chart is defined by 
$$(x,y):\mathbb P(\mathcal O\oplus\Omega^1)\setminus \Sigma\cup F_\infty\to\mathbb C\times\mathbb C$$
where $F_\infty$ is the fiber over $x=\infty$ and $y$ is normalized so that 
\begin{itemize}
\item $y=0$ corresponds to the section defined by $0\in H^0(X,\mathcal O\oplus\Omega^1(D))$,
\item $y=1$ corresponds to the section defined by $\frac{dx}{\prod_i(x-t_i)}\in H^0(X,\mathcal O\oplus\Omega^1(D))$
\end{itemize} 
(and $y=\infty$ is defined by $\mathcal O$). For each finite point $y\in F_i$, we can associate the residue
$\mathrm{Res}_{t_i}(y\frac{dx}{\prod_i(x-t_i)})=\frac{y}{\tau_i}$ (see definition (\ref{def:tau_i})). This gives 
us a natural parametrization of fibers $F_i$.

Choose $2$ points $s_i^{\pm}$ on each fiber $F_i$ (possibly, $s_i^+=s_i^-$). 
Then, blow-up these $8$ points and denote by $\hat S$ the blow-up surface, and still denote by $\hat{\Sigma}$
and $\hat F_i$ the strict transforms. Then consider the open part 
$$M^{\boldsymbol{\theta}}:=\hat{S}\setminus\hat{\Sigma}\cup \hat F_1\cup \hat F_2\cup \hat F_3\cup \hat F_4.$$
The $2$-form $\omega$ extends as a holomorphic symplectic form on $M$ with polar divisor
$$(\omega)_\infty=2\hat{\Sigma}+ \hat F_1+ \hat F_2+ \hat F_3+ \hat F_4.$$
Consider on each fiber $F_i$ the point $m_i$ defined by the arithmetic mean of $s_i^+$ and $s_i^-$.
Precisely, if we denote $\mathrm{Res}_{t_i}(s_i^{\pm})=\theta_i^{\pm}$, then $m_i$ is defined by
$\mathrm{Res}_{t_i}(m_i)=\frac{\theta_i^++\theta_i^-}{2}$.

Since $S\setminus\Sigma$ is the total space of $\Omega^1(D)$, there is a one-to-one correspondence
between global holomorphic sections of $\Omega^1(D)$ and global sections of $S\to\mathbb P^1$
not intersecting $\Sigma$. These sections form a $3$-dimensional family.
Assume first that there exists such a section passing through all $4$ points $m_i$;
this is equivalent to say $\sum_i(\theta_i^++\theta_i^-)=0$.
In this case, it can be checked that $2\hat{\Sigma}+ \hat F_1+ \hat F_2+ \hat F_3+ \hat F_4$ is the degenerate
fiber of an elliptic fibration on $\hat S$. In this case, the open part $M$ may be viewed as a moduli
space of Higgs bundles and this fibration is the well-known Hitchin fibration.
The fibration is defined by those elements of the linear system $\vert 2\hat{\Sigma}+ \hat F_1+ \hat F_2+ \hat F_3+ \hat F_4\vert$
on $S$ passing through the $8$ points $s_i^{\pm}$. 

Assume now that $\sum_i(\theta_i^++\theta_i^-)\not=0$. Then, after applying a bundle isomorphism of the form
$y\mapsto cy$, we may assume $\sum_i(\theta_i^++\theta_i^-)=1$. If $\theta_1^{\pm}+\theta_2^{\pm}+\theta_3^{\pm}+\theta_4^{\pm}\not\in\mathbb Z$ whatever the choice of signs, the surface $M^{\boldsymbol{\theta}}$ identifies with the moduli space of connections $\Con^{\boldsymbol{\theta}}(X,D)$ discussed in the previous section. On the other hand, when $\theta_1^{\pm}+\theta_2^{\pm}+\theta_3^{\pm}+\theta_4^{\pm}\in\mathbb Z$ for some (maybe several) choices of signs, 
then $M^{\boldsymbol{\theta}}$ can still be viewed
as a moduli space of stable connections (we have to delete some reducible connection to obtain a GIT moduli space, see \cite{IIS}).
By the Riemann-Hilbert correspondance (see \cite{IIS}), there is an analytic mapping
$$RH:M^{\boldsymbol{\theta}}\longrightarrow\chi\subset\mathbb C^3$$
where $\chi$ is an affine cubic surface. For generic $\boldsymbol{\theta}$, this is an analytic isomorphism;
for special values of $\boldsymbol{\theta}$ like above, 
the affine surface $\chi$ is singular and the map $RH$ is a minimal analytic resolution of singularities.
It follows that the only occurence of complete curves in $M^{\boldsymbol{\theta}}$ is for special values of $\boldsymbol{\theta}$:
they come from exceptional divisors in the resolution of singularities of $\chi$: they are rational, and there are
at most $4$ such curves in a given $M^{\boldsymbol{\theta}}$. In particular, the divisor 
$2\hat{\Sigma}+ \hat F_1+ \hat F_2+ \hat F_3+ \hat F_4$ is no more contained in an elliptic fibration.

The surface $M^{\boldsymbol{\theta}}$ is not affine however. In fact, 
there are no non-constant regular functions on $M^{\boldsymbol{\theta}}$ (see \cite{ArinkinLysenko}).
This mainly follows from numerical properties
$$(2\hat{\Sigma}+ \hat F_1+ \hat F_2+ \hat F_3+ \hat F_4)\cdot \hat{\Sigma}=(2\hat{\Sigma}+ \hat F_1+ \hat F_2+ \hat F_3+ \hat F_4)\cdot \hat F_i=0;$$
indeed, a rational function on $\hat S$ whose polar divisor is contained in the support of $2\hat{\Sigma}+ \hat F_1+ \hat F_2+ \hat F_3+ \hat F_4$
would define a fibration of complete curves inside the complement $M^{\boldsymbol{\theta}}$, contradicting
the (almost) non existence of complete curves. This generalize Serre example. 

\subsection{How complete curves arise for special parameters}

When $\theta_i^+=\theta_i^-$ for some $i$, then the two points $s_i^+$ and $s_i^-$ coincide in $S$.
In this case, we have to blow-up twice over $s_i^+=s_i^-$ to get the surface $\hat S$. Precisely,
we first blow-up this point to get an exceptional divisor $E_i$, and then blow-up the intersection
point between $E_i$ and the strict transform of $F_i$; we get a second exceptional divisor $E_i'$
and still denote by $E_i$ and $\hat F_i$ the strict transforms in the resulting surface $\hat S$.
We define similarly the open set $M^{\boldsymbol{\theta}}:=\hat S\setminus \hat{\Sigma}\cup \hat F_1\cup \hat F_2\cup \hat F_3\cup \hat F_4$. 
Then, $E_i$ is a rational curve in $M^{\boldsymbol{\theta}}$ having $-2$ self-intersection number.
This curve corresponds to the locus of those parabolic connections  $(E,\nabla,\boldsymbol{l})$
for which the residual matrix at $t_i$ is the scalar matrix $\theta_i^+\cdot I$.

Similarly, when $\theta_i^+=\theta_i^-+1$, from Fuchs' local theory, we can check that a generic element 
$(E,\nabla,\boldsymbol{l})$ of $\Con^{\boldsymbol{\theta}}(X,D)$ will have a logarithmic singular point 
at $t_i$ (the local monodromy has a non trivial Jordan block); however, for some special parabolic connections,
the singular point is not logarithmic anymore (it has diagonal monodromy). The locus of those 
special connections is a complete and smooth rational curve in $M^{\boldsymbol{\theta}}$,
namely the strict transform of the curve $C\subset S$ characterized as follows:
\begin{itemize}
\item $q:C\to \mathbb P^1$ has degree $2$ and $C$ does not intersect $\Sigma$;
\item $C$ intersects $F_i$ at $s_i^+$ and has two smooth local branches at this point;
\item $C$ intersects other fibers $F_j$ at both $s_j^+$ and $s_j^-$.
\end{itemize}
The first condition fixes the linear system on $S$ that contains the curve: $C\in\vert2\Sigma+ F_1+ F_2+ F_3+ F_4\vert$. 
The two other conditions characterize $C$ in this linear system. 
After blowing-up, the resulting curve $\hat C$ becomes smooth and does not intersect
strict transforms $F_j$'s anymore.

When $\theta_i^+-\theta_i^-\in\mathbb Z$, the story is the same. Let us just characterize
the special curve $C$ in the case $\theta_i^+=\theta_i^-+2$:
\begin{itemize}
\item $q:C\to \mathbb P^1$ has degree $4$ and $C$ does not intersect $\Sigma$;
\item $C$ intersects $F_i$ three times at $s_i^+$ and one time at $s_i^-$;
\item $C$ intersects other fibers $F_j$ twice at both $s_j^+$ and $s_j^-$.
\end{itemize}
This can be checked by straightforward formal computation.

Another occurence of complete curves in $M^{\boldsymbol{\theta}}$ comes from 
reducible connections. When say $\theta_1^++\theta_2^++\theta_3^++\theta_4^+=0$,
then some reducible connections arise in $\Con^{\boldsymbol{\theta}}(X,D)$:
each of them stabilize a trivial subbundle $\mathcal O\subset E$ and induces on it
a logarithmic rank $1$ connection with eigenvalue $\theta^+_i$ at $t_i$.
The locus of these reducible connections form a 
section $\hat C$ for the projection $q:M^{\boldsymbol{\theta}}\to \mathbb P^1$.
The corresponding curve $C\subset S$ is the section of $q:C\to \mathbb P^1$ that does not intersect $\Sigma$
and intersecting each $F_i$ at $s_i^+$.

\subsection{Link with Fuchsian systems}

As we shall see here, if we delete one of the exceptional divisors $E_i^\pm\to s_i^\pm$ from $M^{\boldsymbol{\theta}}$,
then the surface becomes affine.

Consider the moduli space $\Con^{\boldsymbol{\theta}}(X,D)$ with eigenvalues 
$\boldsymbol{\theta}$ satisfying
$$\left\{\begin{matrix}\theta_i^+=\theta_i\\ \theta_i^-=-\theta_i\end{matrix}\right.\ \ \ \text{except}
\ \ \ \left\{\begin{matrix}\theta_4^+=-\theta_4\\ \theta_4^-=\theta_4-1\end{matrix}\right.$$
After applying an elementary transformation 
$$\mathrm{Elm}_{t_4}^-\ :\ \Con^{\boldsymbol{\theta}}(X,D)\stackrel{\sim}{\longrightarrow} \Con^{\boldsymbol{\theta}'}(X,D)$$
we obtain new eigenvalues 
$$\left\{\begin{matrix}(\theta_i^+)'=\theta_i\\ (\theta_i^-)=-\theta_i\end{matrix}\right.\ \ \ \text{for all }\ i=1,2,3,4.$$
This latter moduli space splits into the disjoint union of those connection defined on the trivial bundle, namely 
Fuchsian systems discussed in section \ref{sec:FuchsianSyst},
and those defined on $E=\mathcal O(-1)\oplus\mathcal O(1)$ (see section \ref{ex:ModuliConF2}).
In fact, a parabolic connection $(E=\mathcal O\oplus\mathcal O(1),\nabla,\boldsymbol{l})$ of $\Con^{\boldsymbol{\theta}}(X,D)$
is sent to a connection on the non trivial bundle by the elementary transformation
if, and only if, its parabolic $l_4$ lie on the subbundle $\mathcal O(1)\subset E$.
The locus of those special parabolic connections in $M^{\boldsymbol{\theta}}$
is given by the exceptional divisor $E_4^-\to s_4^-$ (or we better should say the affine
part of this divisor once we have deleted the intersection point with $F_4$).
Therefore, $E_4^-$ identifies via $\mathrm{Elm}_{t_4}^-$ to the moduli space 
$\mathbb A^1$ of those connections on $\mathcal O(-1)\oplus\mathcal O(1)$ discussed in section \ref{ex:ModuliConF2}.
Moreover, after deleting $E_4^-$ from $M^{\boldsymbol{\theta}}$, we get an isomorphism with the moduli space of 
Fuchsian systems computed in section \ref{sec:FuchsianSyst}, i.e. a cubic affine surface:
$$M^{\boldsymbol{\theta}}\setminus E_4^-\stackrel{\mathrm{Elm}_{t_4}^-}{\stackrel{\sim}{\longrightarrow}}\Fuchs^{\boldsymbol{\theta}'}(X,D)\stackrel{\sim}{\longrightarrow}S^{\boldsymbol{\theta}}\subset\mathbb C^4.$$
We can do in a similar way with other elementary transformations $\mathrm{Elm}_{t_i}^\pm$
(that we have to compose with a convenient twist in order to fit with $\mathfrak{sl}_2$-systems 
discussed in  section \ref{sec:FuchsianSyst}) and we promtly deduce that $M^{\boldsymbol{\theta}}$
becomes a cubic affine surface once we delete any one of the exceptional divisors $E_i^\pm$.

\section{The Garnier case $n>4$}

The computation of section \ref{subsec:Explicit4} generalizes as follows.
Fix $D=t_1+\cdots+t_n$ a reduced divisor one $X:=\mathbb P^1$, $n\ge4$.
Fix eigenvalues $\boldsymbol{\theta}$,  $\theta_i^\pm\in\mathbb C$ for each $i=1,\ldots,n$,
with $\sum_{i=1}^n(\theta_i^++\theta_i^-)+1=0$. Assume $\boldsymbol{\theta}$
generic for simplicity: 
$$\theta_i^+-\theta_i^-\not\in\mathbb Z\ \ \ \text{and}\ \ \ \theta_1^{\pm}+\cdots+\theta_n^{\pm}\not\in\mathbb Z.$$
Consider the moduli space $\Con^{\boldsymbol{\theta}}(X,D)$ of corresponding connections.
Note that we can omit parabolic structure from the discussion: under genericity of $\boldsymbol{\theta}$,
the parabolic structure $\boldsymbol{l}$ is determined by the connection $(E,\nabla)$.

We would like to explicitely describe the generic connection and derive an explicit open subset 
of $\Con^{\boldsymbol{\theta}}(X,D)$. Let us first assume $E=\mathcal O\oplus\mathcal O(1)$.
The special subbundle $\mathcal O(1)\subset E$ is stabilized by $\nabla$ at exactly $n-3$
points counted with multiplicity; these are the zeroes of the composition map
$$L=\mathcal O(1)\stackrel{\text{inclusion}}{\longrightarrow} E\stackrel{\nabla}{\longrightarrow} E\otimes\Omega^1(D)
\stackrel{\text{quotient}}{\longrightarrow} (E/L)\otimes\Omega^1(D).$$
These points are also the extra apparent singular points for the fuchsian scalar equation derived
from $\nabla$ and the cyclic vector $L$.
Let us consider the case where these zeroes and the poles $t_i$'s are two-by-two distinct;
denote the by $q_1,\ldots,q_{n-3}$ the apparent points. We also assume $q_j\not=\infty$ for simplicity.

Consider now the parabolic structure $\tilde{\boldsymbol{l}}$ defined over each $q_j$ by the special line bundle $L$.
Still assuming all $t_i$'s and $q_j$'s two-by-two distinct, apply a positive elementary transformation
at each $q_j$ directed by $\tilde{\boldsymbol{l}}$ (i.e. by $L=\mathcal O(1)\subset E$).
Let $(E',\nabla',\tilde{\boldsymbol{l}}')$ be the resulting parabolic connection; note that $E'=\mathcal O\oplus\mathcal O(n-2)$.

\begin{prop}The connection $(E',\nabla')$ is given, up to bundle automorphism, by the connection matrix
(we follow notations of section \ref{subsec:ConMatrix})
$$\Omega=\sum_{i=1}^n\begin{pmatrix}0&\frac{1}{\tau_i}\\ -\tau_i\theta_i^+\theta_i^-&\hskip0.5cm\theta_i^++\theta_i^-\end{pmatrix}\frac{dx}{x-t_i}
+\sum_{j=1}^{n-3}\begin{pmatrix}0&0\\ p_j&-1\end{pmatrix}\frac{dx}{x-q_j}+\begin{pmatrix}0&0\\ c(x)&0\end{pmatrix}dx$$
where $c(x)$ is polynomial of degree $n-4$, determined by the fact that all poles $q_j$'s are non
logarithmic (with scalar monodromy). The connection is determined by the new parabolic structure $\tilde{\boldsymbol{l}}'$ defined
by kernel of residual matrices over the $q_j$'s.
\end{prop}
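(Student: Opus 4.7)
The plan is to mimic Section \ref{subsec:Explicit4} step by step, with polynomial degrees adjusted for arbitrary $n$. By Section \ref{subsec:ConMatrix}, on $E = \mathcal{O} \oplus \mathcal{O}(1)$ the connection takes the form
$$\Omega = \begin{pmatrix}a(x) & b(x) \\ c(x) & -x^{n-1}+d(x)\end{pmatrix}\frac{dx}{\prod_i(x-t_i)},$$
with $\deg b \le n-3$, $\deg a, \deg d \le n-2$, $\deg c \le n-1$. In this trivialization the destabilizing subbundle $L = \mathcal{O}(1) \subset E$ is the second factor, and the composite map $L \to E \xrightarrow{\nabla} E \otimes \Omega^1(D) \to (E/L) \otimes \Omega^1(D)$ is given precisely by $b(x)/\prod_i(x-t_i)$. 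Under the genericity hypothesis its zeros are $n-3$ simple apparent points $q_1, \ldots, q_{n-3}$ disjoint from the $t_i$'s, and a diagonal bundle automorphism normalizes $b(x) = \prod_j(x-q_j)$.

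Next I would realize the composite positive elementary transformation at the $q_j$'s directed by $L$ as the bundle birational map
$$M = \begin{pmatrix}1 & 0 \\ 0 & 1/\prod_j(x-q_j)\end{pmatrix},$$
producing a connection $\Omega' = M^{-1}\Omega M + M^{-1}dM$ on $E' = \mathcal{O} \oplus \mathcal{O}(n-2)$. A direct computation shows that the $(1,2)$ entry becomes the desired $dx/\prod_i(x-t_i)$, while the $(2,2)$ entry acquires the additional term $-\sum_j dx/(x-q_j)$ coming from $M^{-1}dM$.

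On the new bundle $E'$, the group of unipotent automorphisms involves a polynomial $f(x)$ of degree $\le n-2$, so the choice $f = -a(x)$ kills the $(1,1)$ entry while preserving $(1,2)$. The $(2,2)$ entry then absorbs the whole trace; combining the spectral data $\mathrm{tr}(\mathrm{Res}_{t_i}\Omega) = \theta_i^+ + \theta_i^-$ with the contribution from the elementary transformations gives
$$\Omega''_{(2,2)} = \sum_i \frac{\theta_i^+ + \theta_i^-}{x-t_i}\, dx \;-\; \sum_j \frac{dx}{x-q_j}.$$
At each $t_i$, the residue has $(1,1)=0$, $(1,2)=1/\tau_i$, trace $\theta_i^+ + \theta_i^-$ and determinant $\theta_i^+\theta_i^-$, which forces the announced matrix. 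At each $q_j$, the $(1,2)$ coefficient $dx/\prod_i(x-t_i)$ is holomorphic, so the residue has zero $(1,2)$ entry and trace $-1$, giving $\begin{pmatrix}0 & 0 \\ p_j & -1\end{pmatrix}$. The remaining polynomial part of the $(2,1)$ entry has degree $\le n-4$ by the bundle constraint on $E'$; this is the polynomial $c(x)$ of the statement.

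Finally, the $q_j$'s are resonant logarithmic singularities with eigenvalues $\{0,-1\}$; by Fuchs' local theory, apparency (scalar, in fact trivial, local monodromy) is a codimension-one condition on the $1$-jet of $\Omega''$ at each $q_j$, producing one linear equation in the coefficients of $c(x)$. This gives $n-3$ linear conditions on the $n-3$ coefficients of the degree $n-4$ polynomial $c(x)$, and the main technical point is to verify that these conditions are linearly independent. I would check this by a local expansion showing that an infinitesimal variation $\delta c(x)$ changes the apparency obstruction at $q_j$ by a nonzero scalar multiple of $\delta c(q_j)$, so that the linear map from coefficients of $c$ to the vector of obstructions is the evaluation map at the distinct points $\{q_j\}$, invertible by Lagrange interpolation. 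Unique determination of $c(x)$ follows, and the parabolic $\tilde{\boldsymbol{l}}'$ at $q_j$ is by construction the $0$-eigenspace of the residue, spanned by $(1, p_j)^T$, i.e.\ the kernel of the residue matrix.
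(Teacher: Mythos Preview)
Your proposal is correct and follows the paper's own argument essentially step by step: normalize $b(x)=\prod_j(x-q_j)$ on $E=\mathcal O\oplus\mathcal O(1)$, apply the birational gauge $M=\mathrm{diag}(1,1/\prod_j(x-q_j))$ to land on $E'=\mathcal O\oplus\mathcal O(n-2)$, then use a unipotent automorphism with $f=-a$ to kill the $(1,1)$ entry and read off the residues from the spectral data. You in fact go further than the paper, which simply asserts that the remaining polynomial $c(x)$ is fixed by apparency at the $q_j$'s; your Lagrange-interpolation argument that the linear system in the coefficients of $c$ has evaluation matrix at the distinct $q_j$'s is a clean way to justify that claim.
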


\begin{proof}We proceed like in section \ref{subsec:Explicit4}.
On the initial bundle $E=\mathcal O\oplus\mathcal O(1)$, the connection matrix writes $\Omega=A(x)\frac{dx}{\prod_i(x-t_i)}$ where
$$A(x)=\begin{pmatrix}a(x)&b(x)\\c(x)&\hskip0.5cm-x^{n-1}+d(x)\end{pmatrix} \ \ \ \text{with}\ \ \ 
\left\{\begin{matrix}\deg(b)\ \le n-3  \hfill\\ \deg(a),\deg(d)\ \le n-2\\ 
\deg(c)\ \le n-1\end{matrix}\right.$$ 
Then, up to a diagonal automorphism, we have $b(x)=\prod_{j=1}^{n-3}(x-q_j)$.
We now apply the $n-3$ elementary transformations and get the new connection matrix
$$\begin{pmatrix}\frac{a(x)}{\prod_i(x-t_i)}&\frac{1}{\prod_i(x-t_i)}\\ \frac{c(x)\prod_j(x-q_j)}{\prod_i(x-t_i)}&\hskip0.5cm-\frac{-x^{n-1}+d(x)}{\prod_i(x-t_i)}-\sum_{j=1}^{n-3}\frac{1}{x-q_j}\end{pmatrix}dx$$
On the new bundle $E'=\mathcal O\oplus\mathcal O(n-2)$, we can use unipotent automorphisms of $E'$
to kill the polynomial coefficient $a(x)$. We now get the unique normal form
\begin{equation}\label{Eq:ConNormFormN}
\Omega'=\begin{pmatrix}0&\frac{1}{\prod_i(x-t_i)}\\ \frac{\tilde{c}(x)}{\prod_i(x-t_i)\prod_j(x-q_j)}&\hskip0.5cm\frac{-x^{n-1}+\tilde{d}(x)}{\prod_i(x-t_i)}-\sum_{j=1}^{n-3}\frac{1}{x-q_j}\end{pmatrix}dx
\end{equation}
with $\tilde{c}(x)$ and $\tilde{d}(x)$ polynomials of degree $3n-7$ and $n-2$ respectively. Now, taking into account the spectral data, we get the normal form of the statement.
\end{proof}

From this statement, one easily derive the following open subset of $\Con^{\boldsymbol{\theta}}(X,D)$.
Consider the surface $S$ defined by the total space of $\mathbb P(\mathcal O\oplus\Omega^1(D))$
and denote by $\Sigma$ the section defined by $\Omega^1(D)$: $S\setminus\Sigma$ naturally identifies 
with the total space of $\Omega^1(D)$. In particular, the affine part of the fiber $F_i$ over $t_i$ has the natural
chart $\mathrm{Res}_{t_i}:F_i\setminus\Sigma\stackrel{\sim}{\longrightarrow}\mathbb C$ given by 
taking the residue of sections of $\Omega^1(D)$; we define two points $s_i^{\pm}\in F_i$ 
by  $\mathrm{Res}_{t_i}(s_i^\pm)=\theta_i^\pm$. Let $\pi=\hat S\to S$ be the blow-up
of $S$ at all $2n$ points and denote by $\hat{\Sigma}$ and $\hat F_i$ the strict transforms.
Then consider the open part 
$$M^{\boldsymbol{\theta}}:=\hat S\setminus(\hat{\Sigma}\cup \hat F_1\cup\cdots\cup\hat F_n)$$
and the symetric product 
$$(M^{\boldsymbol{\theta}})^{(n-3)}:=M^{\boldsymbol{\theta}}\times\cdots\times M^{\boldsymbol{\theta}}/\mathrm{Perm}(1,\cdots,n).$$
Then the above normal form defines an embedding 
$$(M^{\boldsymbol{\theta}})^{(n-3)}\setminus\Delta\hookrightarrow \Con^{\boldsymbol{\theta}}(X,D)$$
where $\Delta$ is the ``codimension one diagonal'' defined by those $\{q_i,p_i\}$ for which $q_i=q_j$
for some $i\not=j$. This is the point of view developped by Oblezin in \cite{Oblezin} where the compactification
of this picture is discussed. Another point of view has been recently studied by Saito and the author in 
\cite{LoraySaito}.



\begin{thebibliography}{99}
%
%

\bibitem{ArinkinLysenko} Arinkin,~D. and Lysenko,~S.,
\textit{Isomorphisms between moduli spaces of {${\rm SL}(2)$}-bundles
              with connections on {${\mathbb P}^1 \setminus \{x_1,\cdots, x_4\}$}},
Mathematical Research Letters,
\textbf{4} (1997), 181--190.

\bibitem{Atiyah} Atiyah, ~M.-F.,
\textit{Vector bundles over an elliptic curve},
Proc. London Math. Soc. (3)  
\textbf{ 7}  (1957), 414--452.

\bibitem{IIS} Inaba,~M., Iwasaki,~K. and Saito,~M.-H.,
Moduli of stable parabolic connections, {R}iemann-{H}ilbert
              correspondence and geometry of {P}ainlev\'e equation of type
              {VI},
\textit{Publ. Res. Inst. Math. Sci.},
\textbf{42} (2006),  987--1089;
\textit{Adv. Stud. Pure Math.},
\textbf{45} (2006), 387--432.

\bibitem{LoraySaito} Loray,~F. and Saito,~M.-H.,
Lagrangian fibration in duality on moduli space of rank two logarithmic connections over the projective line, 
\textit{hal-00789255}, 
(2013). 

\bibitem{Oblezin} Oblezin,~S.,
Isomonodromic deformations of {$\mathfrak{sl}(2)$} {F}uchsian
              systems on the {R}iemann sphere,
\textit{Mosc. Math. J.},
\textbf{5} (2005), 415--441.

\bibitem{Okamoto} Okamoto,~K., 
Sur les feuilletages associ{\'e}s aux {\'e}quations du second ordre
{\`a} points critiques fixes de P. Painlev{\'e}, 
\textit{Japan J. Math.},
\textbf{ 5} (1979), 1--79.

\bibitem{STT}  Saito,~M.-H., Takebe,~T. and  Terajima,~H.,
Deformation of Okamoto-Painlev{\'e} pairs and Painlev{\'e} equations, 
\textit{J. Algebraic Geom.}, 
\textbf{11} (2002), 311--362. 

\end{thebibliography}
\end{document}